\newtheorem{ex}{Example}
\newtheorem{df}{Definition}[section]
\newtheorem{thm}{Theorem}[section]
\newcommand{\go}[1]{\mathfrak{#1}}
\newcommand{\R}{{\rm I}\kern-0.18em{\rm R}}
\newcommand{\1}{{\rm 1}\kern-0.25em{\rm I}}
\newcommand{\E}{{\rm I}\kern-0.18em{\rm E}}
\newcommand{\p}{{\rm I}\kern-0.18em{\rm P}}
\def\fnote#1{\footnote}
\newcommand{\bea}{\begin{eqnarray}}
\newcommand{\eea}{\end{eqnarray}}
\newcommand{\beas}{\begin{eqnarray*}}
\newcommand{\eeas}{\end{eqnarray*}}
\title{ $\go G$-casual Stable Probability Distributions}
\author{Lev B. Klebanov\footnote{Department of Probability and Statistics,
Charles University, Prague, Czech Republic}}
\date{}
\begin{document}
\maketitle

\begin{abstract}
A generalization of stable and casual stable probability distribution is proposed. The notion of $\go G$-casual stability can be used to introduce discrete analogues of stable distributions on the sent $\mathbb Z$ of integers. In contrary to limit definition of stable distributions on $\mathbb Z$ our has algebraic character. 
Examples of corresponding limit theorems are given.
\end{abstract}
{\it Keywords}: stable distributions; casual stable distribution; discrete stable distributions; limit theorems

\section{Introduction}
\setcounter{equation}{0}
The role of infinite divisible and stable probability distributions is well known in probability and its applications. It is mainly connected to limit theorems for the sums of independent random variables on one hand and to the stability (or self-similarity) property of such distributions. There are tenth of monographs on this problematics. Therefore, any generalization of the stability and/or infinite divisibility properties is of essential interest. Here a new notion of stability, so-called $\go G$-casual stability, is proposed. It is a generalization of rather new definition of casual stable distributions (see, \cite{KS_CS}), and may be used for analysis of discrete distributions on the set  $\mathbb Z$ of integers. Our further presentation is given on a language of characteristic functions.

\section{Main Definition}
\setcounter{equation}{0}

Let us remind definitions of strictly stable and casual stable characteristic functions. Suppose that $f(t)$ is a characteristic function of non-degenerate random variable. We say $f(t)$ is {\it strictly stable} characteristic function (corresponding probability distribution function is said to be strictly stable distribution) if for any positive integer $n$ there exists $a_n \in (0,1)$ such that 
\begin{equation}\label{eq1}
f(t) = f^n(a_n t), \;\; t\in \R^1. 
\end{equation} 
In this situation there is $\alpha \in (0,2]$ such that  $a_n = 1/n^{1/\alpha}$. To stress attention on the fact that $\alpha$ is fixed we tell about strictly $\alpha$-stable characteristic function (probability distribution function or random variable). The descriptions of strictly $\alpha$-stable random variables can be found in many textbooks on probability theory.

As it was mentioned in Introduction, a new generalization of the stability property (\ref{eq1}) had been recently given in \cite{KS_CS}.  Let us give their definition here.

Let $f(t)$ be a characteristic function of non-degenerate random variable, and $F(x)$ be its probability distribution function. Let us write it in the following form
\[ f(t) = \int_{-\infty}^{\infty}Exp(i t x) dF(x) = \int_{-\infty}^{0}Exp(i t x) dF(x) +\int_{0}^{\infty}Exp(i t x) dF(x) =\]
\[ = f_1(t)+f_2(t). \]
We say $f(t)$ is {\it casual strictly stable} (or $g_n$-casual strictly stable) characteristic function if for any positive integer $n$ there is a characteristic function $g_n(t)$ such that
\begin{equation}\label{eq2}
f(t)= \Bigl(f_1(i\log g_n(-t))+f_2(-i\log g_n(t))\Bigr)^n, \;\; t\in \R^1.
\end{equation}
In (\ref{eq1}) the constants $a_n$ play role of normalization, while in (\ref{eq2}) this role comes to characteristic function $g_n(t)$. The characteristic function $f$ is essentially the same in both sides of (\ref{eq2}). Let us pass to the definition of $\go G$-casual strictly stable characteristic function.

\begin{df}\label{de1}
  Let $f$ be characteristic function of non-degenerate random variable. We say $f(t)$ is $\go G$-casual strictly stable characteristic function if there is a characteristic function ${\go g}(t)$ of non-degenerate random variable and for any positive integer $n$ there exist characteristic function $g_n(t)$ such that
 \begin{equation}\label{eq3}
 f(t) =\Bigl( {\go g_1}(i \log g_n(-t)) + {\go g_2}(-i \log g_n(t)) \Bigr)^n,
 \end{equation} 
 where (in natural notations)
 \[ {\go g}(t) = \int_{-\infty}^{\infty}Exp(i t x) d{\go G}(x) = \int_{-\infty}^{0}Exp(i t x) d{\go G}(x) +\int_{0}^{\infty}Exp(i t x) d{\go G}(x) =\]
\[ = {\go g}_1(t)+{\go g}_2(t). \]
\end{df}
It is clear, $\go G$-casual strictly stable characteristic function is infinitely divisible in classical sense. The main difference with the definition of casual strictly stable characteristic function is that here we use another  characteristic function ${\go g(t)}$  in the right hand side of (\ref{eq3}) instead of $f(t)$ in the left hand side of it. Interpretation of $\go G$-casual strictly stable characteristic function is very similar to that of casual stable case, and we will not discuss it here.

\section{Example of $\go G$-casual Stable Characteristic Functions}
\setcounter{equation}{0}

The notion of discrete stability for lattice random variables on nonnegative integers was introduced in Steutel and van Harn \cite{StH}. Together with a study of discrete self-decomposability they obtained a form of the generating function of such discrete stable distributions. An attempt to give a definition of discrete stable random variable on the set $\mathbb Z$ had been made in \cite{KS_IV}. However, it was possible to give some definitions on the base of limiting properties of stable distributions. Our first Example will provide an algebraic approach to the first of definition of such kind.

\begin{ex}\label{ex1}
Consider a characteristic function 
\begin{equation}\label{eq4}
f(t) = \exp \{-\lambda (1-\cos t )^{\gamma} \},
\end{equation}
where $\gamma \in (0,1]$. In \cite{KS_IV} the function (\ref{eq4}) was called symmetric discrete stable characteristic function. Let us now show it is $\go G$-casual strictly stable characteristic function. To this goal consider characteristic function of positive discrete stable distribution (see \cite{StH})
\begin{equation}\label{eq5} 
{\go g}(t) =\exp\{ -\lambda (1-e^{i t})^{\gamma} \}, 
\end{equation}
and introduce
\[ g_n(t) = (1-1/n^{1/\gamma})+1/n^{1/\gamma} \cos t . \]
It is clear that:
\begin{enumerate}
\item Distribution function $\go G$ corresponding to characteristic function $\go g$ is concentrated on the set of positive integers. According to this, ${\go g}_1(t)=0$ and ${\go g}_2(t)={\go g}(t)$ for all $t \in \R^1$.
\item Functions $g_n(t)$, $n=1,2, \ldots$ are characteristic functions.
\item $f(t)= {\go g}^n(-i \log g_n(t))$ for all $n=1,2, \ldots$.
\end{enumerate}
In other words, $f(t)$ is $\go G$-casual strictly stable characteristic function.
\end{ex}
In the paper \cite{KS_IV} there was given a second limit definition of discrete stability on $\mathbb Z$. It leads to characteristic function of the form
\[ f(t) = \exp\{ -\lambda_1 (1-e^{it})^{\alpha}  -\lambda_2 (1-e^{-it})^{\alpha}\}, \]
 $\lambda_1>0$, $\lambda_2>0$, $\alpha \in (0,1]$. It is possible to show, this characteristic function is $\go G$-casual strictly stable, too. Below we shall show this for the case $\alpha =1/2$.
 \begin{ex}\label{ex2} Let us consider the function
 \[ f(t) = \exp\{ -\lambda_1 \sqrt{1-e^{it})}  -\lambda_2 \sqrt{1-e^{-it})} \} \]
 and show it is $\go G$-casual strictly stable characteristic function. The fact, $f(t)$ is characteristic function follows from considerations in \cite{KS_IV} or may be verified directly.
 
Consider the same characteristic function (\ref{eq5}) as in Example \ref{ex1}, and "normalizing" characteristic functions $g_n(t)$ define by the relation
\[ g_n(t) = 1- \frac{1}{n^{2}}\Bigl( \frac{\lambda_1}{\lambda}\sqrt{1-e^{it}}+\frac{\lambda_2}{\lambda}\sqrt{1-e^{-it}}\Bigr) ^2.\]
The fact \item $f(t)= {\go g}^n(-i \log g_n(t))$ for all $n=1,2, \ldots$ is obvious. We have verify only that $g_n(t)$ is characteristic function for sufficiently large $\lambda$.

If $g_n$ is a characteristic function, then view of its periodical character, it has to be connected to a random variable $\nu$ on $\mathbb Z$. Using inverse formula we find
\[ \p\{\nu=k \} =\frac{8\lambda_1 \lambda_2}{\lambda^2 \pi n^2 (4k^2-1)}  \]
for $k=\pm 1, \pm 2, \ldots$, and
\[ \p\{ \nu =0 \} =1-\frac{8 \lambda_1 \lambda_2}{\lambda^2 n2 \pi}. \]
It is clear  all the probabilities are positive for sufficiently large $\lambda$.
 \end{ex}
 
 Examples \ref{ex1} and \ref{ex2} shows that the limit definitions of the paper \cite{KS_IV} can be considered as $\go G$-casual strictly stable distributions. Now let us turn to some new examples of $\go G$-casual stable characteristic functions.
 
 \begin{ex}\label{ex3} Hermite distribution is a distribution of positive integer random variable. Its probability generating function is
\[ {\mathcal P}(z) =\exp\{ a_1 (z-1)+a_2 (z^2-1) \}, \]
 where $a_1,a_2$ are non-negative constants. Our aim in this Example is to show Hermite distribution is $\go G$-casual strictly stable. Corresponding characteristic function is
\begin{equation}\label{eq6}
 f(t)=\exp \{a_1 (e^{it}-1)+a_2 (e^{2it}-1)\}
\end{equation}
 
 Consider characteristic function of Poisson distribution
 \[ {\go g}(t) = \exp\{ a (e^{it}-1 \}, \]
 and a sequence of functions
 \[ g_n(t) = 1+ \frac{1}{a n} \Bigl( a_1 (e^{it}-1) + a_2 (e^{2it}-1) \Bigr). \]
 If the constant $a>0$ is chosen in such way that 
 \[  0< \frac{a_1}{n a}< 1; \; 0<\frac{a_2}{n a}<1; \; 1-\frac{a_1+a_2}{n a}>0 \]
 for all positive integers $n$, then $g_n(t)$ is characteristic function. Without any difficulties we see that
 \[ f(t) = {\go g}^n(-i \log g_n(t)) \]
 and therefore $f$ is $\go G$-casual strictly stable characteristic function.
\end{ex}
From Example \ref{ex3} it is easy to come to essentially more general case.
\begin{ex}\label{ex4}
Suppose that $h(t)$ is arbitrary characteristic function, and $a>0$ is a constant. It is known that
\[ f(t) =\exp\{a(h(t)-1)\} \]
is infinite divisible characteristic function. Let us show, it is $\go G$-casual strictly stable characteristic function. Really, define characteristic function
\[ {\go g}(t) =\exp\{ A (e^{it}-1)\} \]
for sufficiently large $A>0$. If the function $g_n(t)$, $n=1,2, \ldots$ has the form
\[ g_n(t)= (1-\frac{a}{nA}) +\frac{a}{nA} h(t) \]
then for all $A>a$ $g_n$ is characteristic function and
\[ {\go g}(-i \log g_n(t)) =\exp\{\frac{a}{n} (h(t) -1)\}. \]
Therefore,
\[ f(t) = {\go g}^n(-i \log g_n(t)). \]
The last equation shows $f(t)$ is $\go G$-casual strictly stable characteristic function.
\end{ex}

\section{Some Properties of $\go G$-casual Stable Characteristic Functions}
\setcounter{equation}{0}

We have seen some examples of $\go G$-casual stable characteristic functions. From Example \ref{ex4} it follows the set of such function is large. In this section we give some properties of $\go G$-casual strictly stable distributions. 

\subsection{Non-uniqueness of Representation (\ref{eq3})}
The first question we are interested in is whether the representation (\ref{eq3}) is unique. It appears, the answer is negative. To show this, let us consider characteristic function of positive discrete stable distribution
\[ f(t) = \exp\{ - \lambda (1-e^{it})^{\gamma}\}, \]
$\gamma \in (0,1/2)$. For the first representation define
\[ g_n^{(1)}(t) =1-\frac{1}{n^{1/\gamma}}+\frac{1}{n^{1/\gamma}}e^{it}. \]
Obviously, 
\[ f(t) = {\go g^{(1)}}^n(-i \log g_n^{(1)}(t)), \]
and we have our first variant of representation (\ref{eq3}), where ${\go g^{(1)}}=f$  (more precisely, ${\go g^{(1)}}_1 = 0$ and ${\go g^{(1)}}_2=f$).

For the second representation choose
\[ {\go g^{(2)}}(t) = \exp\{-\lambda  (1-e^{it})^{2\gamma},  \]
and
\[  g_n^{(2)}(t) = 1- \frac{1}{n^{1/(2\gamma)}}(1-e^{it})^{1/2}.  \]
Now it is obvious,
\[ f(t) = {\go g^{(2)}}^n(-i \log g_n^{(2)}(t)), \]
which gives the second representation.

\subsection{Limit Theorem}
The representation (\ref{eq3})
 \[ f(t) =( {\go g_1}(i \log g_n(-t)) + {\go g_2}(-i \log g_n(t)) )^n \]
 shows, that $\go G$-casual strictly stable random variable is a distribution for sums of $n$ independent random variables with characteristic function of the form ${\go g_1}(i \log g_n(-t)) + {\go g_2}(-i \log g_n(t))$, and therefore it is a limit distribution of the sequence of these sums. It is natural to suppose, this distribution is a limit distribution not only for this sequence, but for sequence defined by another characteristic function $\go g$. It is really so under some additional conditions. Below we give a few results of such kind. For simplicity, let us consider the case of functions $\go g$ corresponding to positive random variables. For this situation (\ref{eq3}) takes form
 \begin{equation}\label{eq3a}
  f(t) ={\go g}^n(-i \log g_n(t)) ).
 \end{equation}
 
 What may we expect? Probably, it would be possible to substitute the sequence $\{-i \log g_n(t), \; n \geq 1\}$ under sign of a different function $\go h$ instead of $\go g$, and pass to limit for corresponding powers as $n$ tends to infinity. However, sometimes it is impossible. For example, if $\go g$ is a characteristic function of a positive integer-valued random variable, then ${\go g}(-i \log g_n(t)) )$ is also characteristic function for any characteristic function $g_n$. If $\go h$ corresponds to the random variables taking arbitrary positive values, it is not so, and we have to suppose, that $g_n$ is infinitely divisible characteristic function. It may appear to be very restrictive. Say, in Example \ref{ex4} for the case of entire characteristic function $h$ the function 
 \[ g_n(t)= \Bigl(1-\frac{a}{nA}\Bigr) +\frac{a}{nA} h(t) \] 
 cannot be infinite divisible for all positive integers $n$. To see that it is enough to note, that entire infinite divisible characteristic function cannot have zeros on complex plain. It implies that $h$ cannot take the sequence of values $1-nA/a$, however, such exclusion may be only one for an non-constant entire function. So, for Example \ref{ex4} we may choose $\go h$ among characteristic functions of random variables taking positive integer values.
 
 Our first limit theorem is connected to Example \ref{ex4}. 
 \begin{thm}\label{th1} Suppose that $h(t)$ is a characteristic function. For $\lambda \in (0,1)$ consider a sequence of characteristic functions
\[ g_n(t) = \Bigl(1-\frac{\lambda}{n}\Bigr)+ \frac{\lambda}{n}*h(t). \]
Let $\go h$ is a characteristic function of positive integer random variable with probability generating function ${\mathcal P}(z)$. Suppose further, that there exist the first derivative of the function $\mathcal P$ at point $z=1$. Then
\[ \lim_{n \to \infty}{\go h}^n(-i \log g_n(t)) = \exp\{ a (h(t)-1)\}, \]
where $a=\lambda {\mathcal P}^{\prime}(1)$.
 \end{thm}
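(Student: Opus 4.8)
The plan is to rewrite the left-hand side entirely in terms of the probability generating function $\mathcal{P}$ and thereby reduce the problem to a single limit of the type $(1+c_n/n)^n$. Since $\go h$ is the characteristic function of a positive integer random variable, $\go h(s)=\mathcal{P}(e^{is})$; substituting $s=-i\log g_n(t)$ gives $e^{is}=g_n(t)$, so that $\go h(-i\log g_n(t))=\mathcal{P}(g_n(t))$ (for large $n$ the value $g_n(t)$ is near $1$, so the logarithm is unambiguous, and in any case the composition is naturally read through $\mathcal{P}$). Because $\lambda\in(0,1)$ and $n\geq 1$, the coefficient $\lambda/n$ lies in $(0,1)$, so $g_n(t)=(1-\lambda/n)+(\lambda/n)h(t)$ is a convex combination of the characteristic functions $1$ and $h(t)$; in particular $|g_n(t)|\leq 1$, so $g_n(t)$ stays in the closed unit disk where $\mathcal{P}$ is defined, and $g_n(t)\to 1$ as $n\to\infty$ for each fixed $t$. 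Writing $g_n(t)-1=-\tfrac{\lambda}{n}(1-h(t))$, the task becomes to evaluate $\lim_{n\to\infty}\mathcal{P}(g_n(t))^n$.

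The main step is a first-order expansion of $\mathcal{P}$ at the boundary point $z=1$. I would write, for $z$ in the closed unit disk,
\[ \frac{\mathcal{P}(z)-1}{z-1}=\sum_{k\geq 1}p_k\,(1+z+\cdots+z^{k-1}), \]
where $p_k=\p\{X=k\}$. Each summand is bounded in modulus by $k$ and tends to $k$ as $z\to1$ within the disk; since the assumption that $\mathcal{P}'(1)$ exists means precisely that $\sum_k k\,p_k=\mathcal{P}'(1)<\infty$, dominated convergence gives $\big(\mathcal{P}(z)-1\big)/(z-1)\to\mathcal{P}'(1)$ as $z\to1$ in the disk. Applying this along the sequence $z=g_n(t)\to1$ yields
\[ \mathcal{P}(g_n(t))=1+\big(g_n(t)-1\big)\big(\mathcal{P}'(1)+\varepsilon_n(t)\big)=1+\frac{\lambda(h(t)-1)}{n}\big(\mathcal{P}'(1)+\varepsilon_n(t)\big), \]
with $\varepsilon_n(t)\to0$ for each fixed $t$. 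This is the step I expect to be the crux: one must control the quotient closely enough to pass to the limit, and the integrability $\mathcal{P}'(1)<\infty$ is exactly what licenses the dominated-convergence argument (were the mean infinite, the stated limit would fail).

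Finally I would take the $n$-th power. Setting $c_n(t)=\lambda(h(t)-1)\big(\mathcal{P}'(1)+\varepsilon_n(t)\big)$, we have $\mathcal{P}(g_n(t))^n=(1+c_n(t)/n)^n$ with $c_n(t)\to\lambda\mathcal{P}'(1)(h(t)-1)=a(h(t)-1)$. The elementary fact that $(1+c_n/n)^n\to e^{c}$ whenever $c_n\to c$ (valid for complex $c_n$, via $\log(1+w)=w+O(|w|^2)$ near $w=0$) then gives $\mathcal{P}(g_n(t))^n\to\exp\{a(h(t)-1)\}$ for every $t$, which is the claimed limit with $a=\lambda\mathcal{P}'(1)$.
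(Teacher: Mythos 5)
Your proof is correct and follows essentially the same route as the paper's: rewrite ${\go h}^n(-i\log g_n(t))$ as $\mathcal{P}^n(g_n(t))$, expand $\mathcal{P}$ to first order at $z=1$ using $g_n(t)-1=\frac{\lambda}{n}(h(t)-1)$, and conclude via $(1+c_n(t)/n)^n\to e^{c(t)}$. The only difference is that you rigorously justify the first-order expansion (through the telescoping identity and dominated convergence under $\mathcal{P}'(1)<\infty$) where the paper simply asserts an $o(\lambda/n)$ remainder.
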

 \begin{proof}
 We have
 \[ {\go h}^n(-i \log g_n(t)) = {\mathcal P}^n(g_n(t))= \Bigl( 1- \frac{\lambda}{n}{\mathcal P}^{\prime}(1) (1-h(t)+ o(\lambda/n)\Bigr)^n \] 
 \[ \stackrel[n \to \infty]{}{\longrightarrow} \exp\{-\lambda {\mathcal P}^{\prime}(1) (1-h(t)\}. \]
 Limit characteristic function $f(t) = \exp\{-\lambda {\mathcal P}^{\prime}(1) (1-h(t)\}$ coincides with corresponding function from Example \ref{ex4} with $a=\lambda {\mathcal P}^{\prime}(1)$.
 \end{proof}
 
Because Example \ref{ex3} is a particular case of Example \ref{ex4} we can have corresponding limit theorem for convergence to Hermite distribution. It follows from Theorem \ref{th1} by taking $h(t) = (a_1 e^{it}+a_2 e^{2it})(a_1+a_2)$. We give no precise formulation here. Limit theorems for Examples \ref{ex1} and \ref{ex2} were given in the paper \cite{KS_IV} as definitions of corresponding discrete stability.

\end{document}